\documentclass[11pt,reqno]{amsart}
%%%%%%%%%%%%%%%%%%%%%%%%%%%%%%%%%%%%%%%%%%%%%%%%%%%%%%%%%%%%%%%%%%%%%%%%%%%%%%%%%%%%%%%%%%%%%%%%%%%%%%%%%%%%%%%%%%%%%%%%%%%%%%%%%%%%%%%%%%%%%%%%%%%%%%%%%%%%%%%%%%%%%%%%%%%%%%%%%%%%%%%%%%%%%%%%%%%%%%%%%%%%%%%%%%%%%%%%%%%%%%%%%%%%%%%%%%%%%%%%%%%%%%%%%%%%
\usepackage{amsmath,amsthm,amscd,amsfonts,amssymb,color}
\usepackage{cite}
\usepackage[mathscr]{eucal}

\usepackage[bookmarksnumbered,colorlinks,plainpages]{hyperref}
\setcounter{MaxMatrixCols}{10}
%TCIDATA{OutputFilter=Latex.dll}
%TCIDATA{Version=5.50.0.2890}
%TCIDATA{<META NAME="SaveForMode" CONTENT="1">}
%TCIDATA{BibliographyScheme=Manual}
%TCIDATA{LastRevised=Tuesday, August 01, 2017 10:17:20}
%TCIDATA{<META NAME="GraphicsSave" CONTENT="32">}
\voffset = -18pt \hoffset = -27pt \textwidth = 5.6in
\textheight 22.5truecm \textwidth 14.5truecm
\setlength{\oddsidemargin}{0.35in}\setlength{\evensidemargin}{0.35in}
\setlength{\topmargin}{-.5cm}
\newtheorem{theorem}{Theorem}[section]
\newtheorem{lemma}[theorem]{Lemma}
\newtheorem*{Acknowledgement}{\textnormal{\textbf{Acknowledgement}}}

\theoremstyle{definition}
\newtheorem{definition}[theorem]{Definition}

\newtheorem{Open Prob}[theorem]{Open Problem}
\theoremstyle{remark}

\numberwithin{equation}{section}
\def\DJ{\leavevmode\setbox0=\hbox{D}\kern0pt\rlap{\kern.04em\raise.188\ht0\hbox{-}}D}
\begin{document}

\title[Cantor's intersection theorem in the setting of $\mathcal{F}$-metric spaces]{Cantor's intersection theorem in the setting of $\mathcal{F}$-metric spaces}

\author[S.\ Som, L.K.\ Dey,]
{Sumit Som$^{1}$, Lakshmi Kanta Dey$^{2}$}

\address{{$^{1}$} Sumit Som,
                    Department of Mathematics,
                    National Institute of Technology
                    Durgapur, India.}
                    \email{somkakdwip@gmail.com}
%\address{{$^{2}$} Ashis Bera
                    %Department of Mathematics,
                    %National Institute of Technology
                    %Durgapur, India.}
                    %\email{beraashis.math@gmail.com}
\address{{$^{2}$} Lakshmi Kanta Dey,
                    Department of Mathematics,
                    National Institute of Technology
                    Durgapur, India.}
                    \email{lakshmikdey@yahoo.co.in}

\keywords{ $\mathcal{F}$-metric space, metrizability, Cantor's intersection theorem \\
\indent 2010 {\it Mathematics Subject Classification}.  $47$H$10$, $54$A$20$, $54$E$50$.}

\begin{abstract}
This paper deals with an open problem posed by Jleli and Samet in \cite[\, M.~Jleli and B.~Samet, On a new generalization of metric spaces, J. Fixed Point Theory Appl, 20(3) 2018]{JS1}. In \cite[\, Remark 5.1]{JS1} They asked whether the Cantor's intersection theorem can be extended to $\mathcal{F}$-metric spaces or not. In this manuscript we give an affirmative answer to this open question. We also show that the notions of compactness, totally boundedness in the setting of $\mathcal{F}$-metric spaces are equivalent to that of usual metric spaces.
%In this manuscript, we claim that the newly introduced $\mathcal{F}$-metric space \cite[\, M.~Jleli and B.~Samet, On a new generalization of metric spaces, J. Fixed Point Theory Appl, 20(3) 2018]{JS1} is metrizable. Also, we deduce that the notions of convergence, Cauchy sequence, completeness due to Jleli and Samet  for $\mathcal{F}$-metric spaces are equivalent with that of usual metric spaces. Moreover, we assert that the Banach contraction principle in the context of $\mathcal{F}$-metric spaces is a direct consequence of its standard metric counterpart.
%Also, we present an example of $\mathcal{F}$-metric with $(f,\alpha)\in \mathcal{F}\times [0,\infty)$ such that $f$ is not continuous from right at some point in $[0,\infty)$.
\end{abstract}

\maketitle

\setcounter{page}{1}

\centerline{}

\centerline{}

\section{\bf Introduction}
\baselineskip .55 cm
%Many mathematicians are attracted to work on a topic which is more fundamental and has a lot of applications in many diversified fields. One of the major motivations is to generalize or weaken a certain structure and develop new results compatible to the weaker one. Indeed, there are generalizations which  genuinely develop the subject as a whole and also, there are some which contribute nothing new to the literature.
%It is interesting to note that some of the generalizations are not so important as they are equivalent with some other structures which are easier to handle.
%Likewise, to generalize the notion of distance functions, in 1906, Fr$\acute{e}$chet first introduced the concept of metric spaces and a century later, we have numerous generalizations of the metric structure. Intent readers are referred to \cite{D3,M2,HS} and references therein for some relevant extensions. Unfortunately, some of the generalizations become redundant and turn into metrizable merely adding premise to the subject. As in 2007, Huang and Zhang \cite{HZ} introduced the notion of a cone metric space on a positive cone in a Banach space. Following that, a lot of research articles dealt with the setting and evolved the structure with a number of results. Although in 2011, Khani and Pourmahdian \cite{KP} explicitly constructed a metric on a specified cone metric space and proved that cone metric spaces are metrizable.

Recently, Jleli and Samet \cite{JS1} proposed a new generalization of our usual metric space concept. By means of a certain class of functions, the authors defined the notion of an $\mathcal{F}$-metric space. Firstly, we will recall the definition of such kind of spaces. Consider $\mathcal{F}$ be any class of functions $f:(0,\infty)\rightarrow \mathbb{R}$ which satisfy the following conditions:

($\mathcal{F}_1$) $f$ is non-decreasing, i.e., $ 0<s<t\Rightarrow f(s)\leq f(t)$.

($\mathcal{F}_2$) For every sequence $\{t_n\}_{n\in \mathbb{N}}\subseteq (0,+\infty)$, we have
$$\lim_{n\to +\infty}t_n=0 \Longleftrightarrow \lim_{n\to +\infty}f(t_n)=-\infty.$$

%Employing these functions the authors came up with an extension of the concept of usual metric spaces and constructed 

Now we like to give the definition of $\mathcal{F}$-metric space as follows:

\begin{definition} \cite{JS1} \label{D1}
Let $X$ be a non-empty set and $D:X\times X\rightarrow [0,\infty)$ be a given mapping. Suppose that there exists $(f,\alpha)\in \mathcal{F}\times [0,\infty)$ such that
\begin{enumerate}
\item[(D1)] $(x,y)\in X \times X,~~ D(x,y)=0\Longleftrightarrow x=y$.
\item[(D2)] $D(x,y)=D(y,x),~$ $\forall~ (x,y)\in X \times X$.
\item[(D3)] For every $(x,y)\in X\times X$, for each $N\in \mathbb{N}, N\geq2$, and for every $(u_i)_{i=1}^{N}\subseteq X $ with $(u_1,u_N)=(x,y)$, we have
$$D(x,y)>0 \Longrightarrow f(D(x,y))\leq f\left(\sum_{i=1}^{N-1}D(u_i,u_{i+1})\right)+ \alpha.$$
\end{enumerate}
Then $D$ is said to be an $\mathcal{F}$-metric on $X$, and the pair $(X,D)$ is said to be an $\mathcal{F}$-metric space.
\end{definition}
By means of the manuscript \cite{SAL}, the authors proved that this new generalization of metric space is metrizable under a suitable metric $d:X\times X\rightarrow \mathbb{R}$ defined by 
\begin{equation}
d(x,y)=\mbox{inf}\left\{\sum_{i=1}^{N-1}D(u_i, u_{i+1}): N\in \mathbb{N}, N\geq 2, \{u_i\}_{i=1}^{N}\subseteq X~\mbox{with}~(u_1,u_N)=(x,y)\right\}.\label{e}
\end{equation}
They also showed that the notions of Cauchy sequence, completeness, Banach contraction principle are equivalent with that of usual metric spaces. In this manuscript we give an affermative answer to the open question posed by Jleli and Samet in \cite{JS1}.

%In this article, by means of the undermentioned Theorem \ref{FMS}, we assert that the recently proposed $\mathcal{F}$-metric spaces \cite{JS1} are metrizable.  Hence we also state that the new metric induces same notions of convergence, Cauchy sequence and completeness as that of the usual metric spaces. Moreover, we confirm that the celebrated Banach contraction principle in $\mathcal{F}$-metric context can be derived from its existing standard metric counterpart. 

\section{\bf Main Results }
In this section we will prove the Cantor's intersection theorem in the setting of $\mathcal{F}$-metric spaces. But before proving this theorem we will give a lemma which will be needed for proving the theorem. From now on $D$ will denote the $\mathcal{F}$-metric, $d$ will denote the metric defined by \eqref{e}. $\tau_{\mathcal{F}}$ and $\tau_{d}$ denotes the topologies generated by the metrics $D$ and $d$ respectively. Before stating the lemma we first want to introduce the notion of $\mathcal{F}$-boundedness in the setting of $\mathcal{F}$-metric spaces.

\begin{definition}
Let $(X,D)$ be an $\mathcal{F}$-metric space with $(f,\alpha)\in \mathcal{F}\times [0,\infty).$ Then $A\subseteq X$ is said to be $\mathcal{F}$-bounded if $\exists~ M>0$ such that $D(x,y)\leq M~\forall~x,y\in A.$ 
\end{definition}
 
\begin{lemma}\label{e1}
Let $(X,D)$ be an $\mathcal{F}$-metric space with $(f,\alpha)\in \mathcal{F}\times [0,\infty)$ and let $A\subseteq X$ is $\mathcal{F}$-bounded Then $A$ is bounded w.r.t the metric $d$ and $diam_{d}(A)\leq diam_{D}(A).$
\end{lemma}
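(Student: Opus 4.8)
The plan is to reduce the entire statement to the single pointwise inequality $d(x,y) \le D(x,y)$, valid for all $x,y \in X$. To establish this, I would look directly at the infimum \eqref{e} defining $d(x,y)$ and observe that one admissible choice of chain is always the trivial two-point chain with $N=2$ and $(u_1,u_2)=(x,y)$, whose associated sum is exactly $D(x,y)$. Since $d(x,y)$ is the infimum over a collection of sums that contains this particular value, it cannot exceed it, giving $d(x,y)\le D(x,y)$ for every pair $x,y$.

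Granting this inequality, the rest is immediate. Assuming $A$ is $\mathcal{F}$-bounded, fix $M>0$ with $D(x,y)\le M$ for all $x,y\in A$. Then for all such $x,y$ we have $d(x,y)\le D(x,y)\le M$, which already shows that $A$ is bounded with respect to the metric $d$. Taking the supremum over $x,y\in A$ in the pointwise bound $d(x,y)\le D(x,y)$ then yields $diam_d(A)=\sup_{x,y\in A} d(x,y)\le \sup_{x,y\in A} D(x,y)=diam_D(A)$, which is the desired conclusion.

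There is no genuine obstacle in this argument; essentially all of the content lies in recognizing that the defining infimum for $d$ always includes the length of the trivial single-edge chain, so that $d$ can never exceed $D$. The only point deserving a moment of care is the monotonicity of the supremum under the pointwise inequality, which is entirely routine. I would therefore present the proof in two short steps: first the inequality $d\le D$ from the trivial chain, then the passage to suprema to obtain both boundedness and the diameter estimate.
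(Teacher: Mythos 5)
Your proposal is correct and follows essentially the same route as the paper: both hinge on the pointwise inequality $d(x,y)\le D(x,y)$, which the paper cites from the definition \eqref{e} and which you justify explicitly via the trivial two-point chain, and then pass to the bound $M$ and the supremum. In fact your write-up is slightly more complete, since the paper omits the justification of $d\le D$ and the diameter step.
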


\begin{proof}
Let $A\subseteq X$ is $\mathcal{F}$-bounded. Then $\exists~ M>0$ such that $D(x,y)\leq M~\forall~x,y\in A.$ Now by the definition of the metric $d$ \eqref{e} we have $$d(x,y)\leq D(x,y)~\forall~x,y\in X.$$ 
$$\Rightarrow d(x,y)\leq M~\forall~x,y\in A.$$ This shows that $A$ is bounded w.r.t the metric $d.$ Proof of the second part follows similarly, so omitted.
\end{proof}

\begin{theorem} [\textbf{Cantor's Intersection Theorem}]
Let $(X,D)$ be an $\mathcal{F}$-metric space with $(f,\alpha)\in \mathcal{F}\times [0,\infty)$. Then $X$ is $\mathcal{F}$-complete if and only if for every decreasing sequence $\{F_n\}_{n\in \mathbb{N}}$ of non-empty, $\mathcal{F}$-closed subsets of $X$ with $diam_{D}(F_n)\rightarrow 0$ as $n\rightarrow \infty$, $\bigcap_{i=1}^{\infty}F_{i}$ contains only one point.
\end{theorem}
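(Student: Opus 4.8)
The plan is to establish both implications by arguing directly with the $\mathcal{F}$-metric $D$, using the chain inequality (D3) together with property $(\mathcal{F}_2)$, rather than transporting the problem to the equivalent metric $d$. The metrizability of $(X,D)$ does make the forward implication accessible through the classical Cantor theorem applied to $(X,d)$, since by Lemma \ref{e1} one has $diam_{d}(F_n)\le diam_{D}(F_n)\to 0$; however, for the converse the same inequality runs the wrong way, so there seems to be no way to feed the $\mathcal{F}$-diameter data into the metric version, and a direct argument in $(X,D)$ is the route I would take throughout for uniformity.

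For the forward implication, I would assume $(X,D)$ is $\mathcal{F}$-complete and take a decreasing sequence $\{F_n\}$ of non-empty $\mathcal{F}$-closed sets with $diam_{D}(F_n)\to 0$. Choosing $x_n\in F_n$, nestedness gives $x_m,x_n\in F_n$ whenever $m\ge n$, so $D(x_n,x_m)\le diam_{D}(F_n)\to 0$; hence $\{x_n\}$ is $\mathcal{F}$-Cauchy and $\mathcal{F}$-converges to some $x$. Since each $F_n$ contains the tail $\{x_m:m\ge n\}$ and is $\mathcal{F}$-closed, the limit $x$ lies in every $F_n$, so $x\in\bigcap_n F_n$. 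Finally, any two points of the intersection belong to every $F_n$ and are therefore at $D$-distance at most $diam_{D}(F_n)\to 0$, so by (D1) they coincide; thus $\bigcap_n F_n$ is exactly the single point $x$.

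For the converse I would assume the intersection property and verify $\mathcal{F}$-completeness directly. Given an $\mathcal{F}$-Cauchy sequence $\{x_n\}$, put $A_n=\{x_k:k\ge n\}$ and $F_n=\overline{A_n}$, the $\mathcal{F}$-closure; these are non-empty, nested, and $\mathcal{F}$-closed, and the Cauchy condition says $\delta_n:=diam_{D}(A_n)\to 0$. The crucial point, which I expect to be the main obstacle, is to show $diam_{D}(F_n)\to 0$: because of the additive constant $\alpha$ in (D3), the map $D$ need not be continuous, so one cannot simply assert that passing to the closure preserves the $D$-diameter. My plan is to fix $u,v\in F_n$ with $D(u,v)>0$, approximate them by $x_j,x_k\in A_n$ with $D(u,x_j)$ and $D(x_k,v)$ each at most $\delta_n/2$, and apply (D3) along the chain $u,x_j,x_k,v$ to obtain $f(D(u,v))\le f(D(u,x_j)+D(x_j,x_k)+D(x_k,v))+\alpha\le f(2\delta_n)+\alpha$, using monotonicity of $f$ and $D(x_j,x_k)\le\delta_n$.

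Since $\delta_n\to 0$, property $(\mathcal{F}_2)$ gives $f(2\delta_n)\to-\infty$, so for any prescribed $\varepsilon>0$ the bound $f(2\delta_n)+\alpha$ eventually falls below $f(\varepsilon)$; as $f$ is non-decreasing this forces $D(u,v)<\varepsilon$ for all $u,v\in F_n$ once $n$ is large, that is, $diam_{D}(F_n)\to 0$, the additive constant $\alpha$ being absorbed in the limit. The hypothesis then yields $\bigcap_n F_n=\{x\}$ for a unique $x$. To conclude $x_n\to x$, I would fix $\varepsilon>0$, choose $n$ with $diam_{D}(F_n)<\varepsilon$, and note that for every $m\ge n$ both $x_m$ and $x$ lie in $F_n$, whence $D(x_m,x)\le diam_{D}(F_n)<\varepsilon$. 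Thus $\{x_n\}$ $\mathcal{F}$-converges to $x$, every $\mathcal{F}$-Cauchy sequence converges, and $(X,D)$ is $\mathcal{F}$-complete, which closes the equivalence.
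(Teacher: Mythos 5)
Your proof is correct, but it takes a genuinely different route from the paper's. The paper proves both directions by transporting everything to the auxiliary metric $d$ of \eqref{e}: $\mathcal{F}$-completeness is equivalent to $d$-completeness by \cite[Theorem 2.3 (iii)]{SAL}, $\mathcal{F}$-closed sets are $d$-closed since $\tau_{\mathcal{F}}=\tau_{d}$, Lemma \ref{e1} gives $diam_{d}(F_n)\leq diam_{D}(F_n)\rightarrow 0$, and then the classical Cantor theorem is invoked; the converse is handled ``by similar arguments''. You instead work intrinsically with $D$, using (D3) and $(\mathcal{F}_2)$, never invoking metrizability. Your route buys rigor exactly where the paper is thinnest: for the converse, the classical reverse Cantor theorem needs the singleton-intersection property for \emph{every} decreasing sequence of $d$-closed sets with $diam_{d}(F_n)\rightarrow 0$, whereas the theorem's hypothesis only supplies it for sequences with $diam_{D}(F_n)\rightarrow 0$; since Lemma \ref{e1}'s inequality points the wrong way, an extra argument is needed to close this gap, and the paper does not give one. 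Your construction (closures of the tails of an $\mathcal{F}$-Cauchy sequence, the chain estimate $f(D(u,v))\leq f(2\delta_n)+\alpha$ via (D3), then $(\mathcal{F}_2)$) confronts this difficulty head-on, so on the converse your argument is actually more complete than the paper's. Note, however, that your claim that there is ``no way to feed the $\mathcal{F}$-diameter data into the metric version'' is too pessimistic: inequality \eqref{e3} from \cite[Theorem 3.1]{JS1}, which the paper itself uses later in Theorem \ref{t1}, combined with $(\mathcal{F}_2)$ shows that $diam_{d}(F_n)\rightarrow 0$ forces $diam_{D}(F_n)\rightarrow 0$ --- the same mechanism as your chain estimate --- so the transport-based proof of the converse can also be repaired along those lines. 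Two details you should still make explicit: a point $u$ of the $\tau_{\mathcal{F}}$-closure of $A_n$ admits points of $A_n$ at arbitrarily small $D$-distance because $\tau_{\mathcal{F}}$-convergence coincides with $D$-convergence and the topology is metrizable (so closure equals sequential closure); and the degenerate case $\delta_n=0$, where $A_n$ and hence $F_n$ are singletons and $f$ cannot be evaluated at $0$, should be treated separately (it is trivial).
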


\begin{proof}
First of all suppose that $X$ is $\mathcal{F}$-complete. Then $X$ is complete w.r.t the metric $d$ \cite[\, Theorem 2.3 (iii)]{SAL}. Now suppose, $\{F_n\}_{n\in \mathbb{N}}$ is a decreasing sequence of non-empty, $\mathcal{F}$-closed subsets of $X$ with $diam_{D}(F_n)\rightarrow 0$ as $n\rightarrow \infty.$ As $\forall~n\in \mathbb{N},~X\setminus F_n \in \tau_{\mathcal{F}}\Rightarrow X\setminus F_n \in \tau_{d}.$ So $\forall~n\in \mathbb{N}, F_{n}$ is closed w.r.t the metric $d.$ Also by lemma \ref{e1}, $diam_{d}(F_n)\leq diam_{D}(F_n)\rightarrow 0$ as $n\rightarrow \infty.$ So by Cantor's intersection theorem for standard metric spaces we can say that $\bigcap_{i=1}^{\infty}F_{i}$ contains only one point.\\
For the reverse part, suppose $\{F_n\}_{n\in \mathbb{N}}$ is a decreasing sequence of non-empty, $\mathcal{F}$-closed subsets of $X$ with $diam_{D}(F_n)\rightarrow 0$ as $n\rightarrow \infty$ and $\bigcap_{i=1}^{\infty}F_{i}$ contains only one point. So by similar arguments we can say that $\{F_n\}_{n\in \mathbb{N}}$ is a decreasing sequence of non-empty, closed subsets of $X$ w.r.t the metric $d$ with $diam_{d}(F_n)\rightarrow 0$ as $n\rightarrow \infty$ and $\bigcap_{i=1}^{\infty}F_{i}$ contains only one point. So similarly by Cantor's intersection theorem for standard metric spaces we can say that $X$ is complete with respect to the metric $d.$ So $X$ is $\mathcal{F}$-complete by \cite[\, Theorem 2.3 (iii)]{SAL}.
\end{proof}

Now we will prove that the notion of compactness in the setting of $\mathcal{F}$-metric spaces is equivalent with that of usual metric spaces.

\begin{theorem}\label{e2}
Let $(X,D)$ be an $\mathcal{F}$-metric space with $(f,\alpha)\in \mathcal{F}\times [0,\infty)$. Then $A\subseteq X$ is $\mathcal{F}$-compact if and only if $A$ is compact w.r.t the metric $d~ \eqref{e}.$
\end{theorem}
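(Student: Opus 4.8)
The plan is to prove $\mathcal{F}$-compactness of $A$ is equivalent to compactness of $A$ with respect to the metric $d$ by showing that the two topologies $\tau_{\mathcal{F}}$ and $\tau_d$ coincide on $X$, after which compactness — being a purely topological property — transfers automatically. The crucial input is the metrizability result behind \cite{SAL}, which tells us that the identity map $(X,\tau_{\mathcal{F}}) \to (X,\tau_d)$ is a homeomorphism; indeed, the inequality $d(x,y) \leq D(x,y)$ established in the proof of Lemma \ref{e1} already gives one containment of the topologies, and the converse containment is precisely what the metrizability theorem supplies. Once $\tau_{\mathcal{F}} = \tau_d$ as topologies on $X$, the subspace topologies on $A$ agree as well.

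First I would recall that a subset $A \subseteq X$ is $\mathcal{F}$-compact precisely when every sequence in $A$ has a subsequence converging (in the $\tau_{\mathcal{F}}$ sense) to a point of $A$, and similarly $A$ is compact w.r.t. $d$ when every sequence in $A$ has a $d$-convergent subsequence with limit in $A$. So the proof reduces to checking that $\mathcal{F}$-convergence and $d$-convergence of sequences agree. For this I would verify both directions: if $x_n \to x$ in $d$, then since $d(x_n,x) \to 0$ one uses property $(\mathcal{F}_2)$ together with the relationship between $d$ and $D$ to conclude $D(x_n,x) \to 0$; conversely $D(x_n,x) \to 0$ forces $d(x_n,x) \to 0$ immediately from $d \leq D$. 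This sequential equivalence is really just a restatement that $\tau_{\mathcal{F}} = \tau_d$, so either formulation works.

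With the equivalence of convergence (equivalently, of the topologies) in hand, the argument is short and symmetric, mirroring the structure of the Cantor theorem proof above: assume $A$ is $\mathcal{F}$-compact, take an arbitrary sequence in $A$, extract an $\mathcal{F}$-convergent subsequence with limit in $A$, and observe that this subsequence is then $d$-convergent to the same limit, so $A$ is $d$-compact; the reverse implication is identical with the roles of $D$ and $d$ interchanged. I would present it cleanly as: $\mathcal{F}$-compactness $\Leftrightarrow$ sequential compactness in $\tau_{\mathcal{F}}$ $\Leftrightarrow$ sequential compactness in $\tau_d$ $\Leftrightarrow$ compactness w.r.t. $d$, where the middle equivalence is the content just established and the outer equivalences are standard for metric topologies.

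The main obstacle, such as it is, lies entirely in justifying that $\tau_{\mathcal{F}} = \tau_d$ rather than merely $\tau_{\mathcal{F}} \subseteq \tau_d$. The inclusion $d \leq D$ gives only one direction cheaply; the reverse — that every $\tau_{\mathcal{F}}$-open set is $\tau_d$-open, i.e. that $D$-convergence follows from $d$-convergence — genuinely requires the finer metrizability analysis of \cite{SAL} and the defining property $(\mathcal{F}_2)$, and it is the step one must not gloss over. Everything else is a routine transfer of a topological invariant across a homeomorphism.
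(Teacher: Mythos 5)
Your proposal is correct and rests on exactly the same key fact as the paper's own proof: the metrizability result of \cite{SAL} giving $\tau_{\mathcal{F}} = \tau_{d}$, after which compactness transfers automatically as a topological invariant. The only cosmetic difference is that you route the transfer through sequential compactness and convergence of sequences, while the paper argues directly with open covers; since both topologies are metrizable, the two formulations are interchangeable, as you yourself note.
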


\begin{proof}
First of all suppose that $A\subseteq X$ is $\mathcal{F}$-compact. Let $\{U_{\alpha}\}_{\alpha\in \Lambda}$ be an open cover of $A$ w.r.t the metric $d.$ So $U_{\alpha} \in \tau_{d}~\forall~\alpha \in \Lambda \Rightarrow U_{\alpha} \in \tau_{\mathcal{F}}~\forall~\alpha \in \Lambda.$ As $A\subseteq X$ is $\mathcal{F}$-compact so $\exists$ a finite set $\Lambda_0 \subseteq \Lambda$ such that $A\subseteq \bigcup_{\alpha\in \Lambda_0}U_{\alpha}.$ But as $\tau_{d}=\tau_{\mathcal{F}},$ So $U_{\alpha}\in \tau_{d}~\forall~\alpha \in \Lambda_0.$ This shows that $A$ is compact w.r.t the metric $d.$ For the converse part, the arguments are similar, so omitted.
\end{proof}

\begin{theorem}
Let $(X,D)$ be an $\mathcal{F}$-metric space with $(f,\alpha)\in \mathcal{F}\times [0,\infty)$. Then $X$ is $\mathcal{F}$-compact if and only if for every collection $\{F_{\alpha}\}_{\alpha \in \Lambda}$ of $\mathcal{F}$-closed sets, having the finite intersection property, $\bigcap_{\alpha\in \Lambda}F_{\alpha}\neq \phi.$ 
\end{theorem}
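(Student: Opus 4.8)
The plan is to reduce everything to the standard finite–intersection–property (FIP) characterization of compactness in the ordinary metric space $(X,d)$, exploiting the fact, used repeatedly in the preceding arguments, that the two topologies coincide: $\tau_{\mathcal{F}}=\tau_{d}$. The single most useful consequence of this identity is that a set is $\mathcal{F}$-closed if and only if it is closed with respect to $d$, and (taking $A=X$ in Theorem \ref{e2}) that $X$ is $\mathcal{F}$-compact if and only if $X$ is compact with respect to the metric $d$. Once these two equivalences are in hand, the statement is merely a restatement of a classical topological theorem in a language adapted to $\mathcal{F}$-metric spaces.

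For the forward implication I would assume $X$ is $\mathcal{F}$-compact and take an arbitrary collection $\{F_{\alpha}\}_{\alpha\in\Lambda}$ of $\mathcal{F}$-closed sets with the finite intersection property. Since $\tau_{\mathcal{F}}=\tau_{d}$, each $F_{\alpha}$ is closed with respect to $d$, and by Theorem \ref{e2} the space $X$ is compact with respect to $d$. Applying the standard FIP criterion for compact metric (indeed, topological) spaces to $(X,d)$ then yields $\bigcap_{\alpha\in\Lambda}F_{\alpha}\neq\phi$ directly.

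For the converse I would argue contrapositively through the same dictionary. Assuming that every family of $\mathcal{F}$-closed sets with the finite intersection property has nonempty intersection, and again invoking $\tau_{\mathcal{F}}=\tau_{d}$ to identify $\mathcal{F}$-closed sets with the $d$-closed sets, I obtain that every family of $d$-closed sets with the finite intersection property has nonempty intersection. The classical FIP theorem then gives that $(X,d)$ is compact, and one final appeal to Theorem \ref{e2} upgrades this to $\mathcal{F}$-compactness of $X$.

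The proof carries essentially no analytic difficulty, since the whole content has been front-loaded into the topological equivalence $\tau_{\mathcal{F}}=\tau_{d}$ and into Theorem \ref{e2}. The only point demanding genuine care is the bookkeeping in the converse direction: one must be sure that the correspondence between $\mathcal{F}$-closed and $d$-closed families is a bijection respecting the finite intersection property and the emptiness of the total intersection, so that the hypothesis transfers without loss to $(X,d)$. Beyond this routine translation, the argument is a direct citation of the standard FIP characterization of compactness applied to the metric $d$.
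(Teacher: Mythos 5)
Your proposal is correct and follows essentially the same route as the paper: both directions reduce to the classical finite-intersection-property characterization of compactness in $(X,d)$ via the identification $\tau_{\mathcal{F}}=\tau_{d}$ together with Theorem \ref{e2}. The only difference is cosmetic — the paper leaves the converse as ``similar arguments,'' which you spell out explicitly (and note that your converse is actually a direct argument, not a contrapositive one, though this does not affect its validity).
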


\begin{proof}
Suppose that $X$ is $\mathcal{F}$-compact. So by Theorem \ref{e2} we can say that $X$ is compact w.r.t the metric $d.$ Now suppose $\{F_{\alpha}\}_{\alpha \in \Lambda}$ be a collection of $\mathcal{F}$-closed sets having the finite intersection property. So $\{F_{\alpha}\}_{\alpha \in \Lambda}$ will be a collection of closed sets w.r.t the metric $d$ having the finite intersection property. As $X$ is compact so we must have $\bigcap_{\alpha\in \Lambda}F_{\alpha}\neq \phi.$\\

For the reverse part the arguments are similar and follows from Theorem \ref{e2}.
\end{proof}

The authors defined the concept of $\mathcal{F}$-totally boundedness in \cite{JS1} in the setting of $\mathcal{F}$-metric spaces. But our next theorem ensures that the concept of $\mathcal{F}$-totally boundedness is equivalent to that of usual metric spaces.

\begin{theorem}\label{t1}
Let $(X,D)$ be an $\mathcal{F}$-metric space with $(f,\alpha)\in \mathcal{F}\times [0,\infty)$. If $A\subseteq X$ is $\mathcal{F}$-totally bounded if and only if $A$ is totally bounded w.r.t to the metric $d.$ 
\end{theorem}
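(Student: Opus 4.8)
The plan is to prove the two implications separately, relying on a quantitative comparison between $D$ and $d$ rather than on the purely topological identity $\tau_{d}=\tau_{\mathcal{F}}$. Throughout write $B_{D}(x,r)=\{y\in X:D(x,y)<r\}$ and $B_{d}(x,r)=\{y\in X:d(x,y)<r\}$, and recall from the proof of Lemma \ref{e1} that $d(x,y)\le D(x,y)$ for all $x,y\in X$, so that $B_{D}(x,r)\subseteq B_{d}(x,r)$.

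The forward implication is the easy one. If $A$ is $\mathcal{F}$-totally bounded, then for a given $\epsilon>0$ we may cover $A$ by finitely many $\mathcal{F}$-balls $B_{D}(x_{1},\epsilon),\dots,B_{D}(x_{n},\epsilon)$; since each $B_{D}(x_{i},\epsilon)\subseteq B_{d}(x_{i},\epsilon)$, the sets $B_{d}(x_{i},\epsilon)$ form a finite $d$-cover of $A$, proving that $A$ is totally bounded with respect to $d$.

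The reverse implication is the substantial one, because the inequality $d\le D$ points the wrong way: a cover by $d$-balls need not be a cover by $D$-balls of the same radius. To bridge this gap I would first establish the following uniform estimate: for every $\epsilon>0$ there is $\delta>0$, depending only on $\epsilon$, $f$ and $\alpha$, such that $d(x,y)<\delta$ implies $D(x,y)<\epsilon$ for all $x,y\in X$. To see this, fix $x,y$ with $D(x,y)>0$ and suppose $d(x,y)<\delta$. By the definition \eqref{e} of $d$ as an infimum there is a chain $(u_{i})_{i=1}^{N}$ from $x$ to $y$ with $\sum_{i=1}^{N-1}D(u_{i},u_{i+1})<\delta$; applying (D3) together with the monotonicity ($\mathcal{F}_1$) of $f$ gives $f(D(x,y))\le f\bigl(\sum_{i=1}^{N-1}D(u_{i},u_{i+1})\bigr)+\alpha\le f(\delta)+\alpha$. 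By ($\mathcal{F}_2$) we have $f(\delta)\to-\infty$ as $\delta\to0^{+}$, so we may choose $\delta$ small enough that $f(\delta)+\alpha<f(\epsilon)$; then $f(D(x,y))<f(\epsilon)$, and monotonicity of $f$ forces $D(x,y)<\epsilon$ (the case $D(x,y)=0$ being trivial). This estimate is precisely the uniform equivalence of the two metrics that the topological statement $\tau_{d}=\tau_{\mathcal{F}}$ alone does not supply, and it is the crux of the argument.

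Granting the estimate, the reverse implication is immediate: given $\epsilon>0$, pick $\delta>0$ as above, so that $B_{d}(x_{i},\delta)\subseteq B_{D}(x_{i},\epsilon)$ for every centre $x_{i}$. If $A$ is totally bounded with respect to $d$, cover it by finitely many balls $B_{d}(x_{1},\delta),\dots,B_{d}(x_{n},\delta)$; then $B_{D}(x_{1},\epsilon),\dots,B_{D}(x_{n},\epsilon)$ cover $A$, so $A$ is $\mathcal{F}$-totally bounded. I expect the only genuine obstacle to be the derivation of the uniform $\delta$–$\epsilon$ estimate, since everything else reduces to the ball inclusions $B_{D}(\cdot,\epsilon)\subseteq B_{d}(\cdot,\epsilon)$ and $B_{d}(\cdot,\delta)\subseteq B_{D}(\cdot,\epsilon)$. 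The delicate point is conceptual: total boundedness, unlike the compactness treated in Theorem \ref{e2}, is a uniform rather than a topological property, so one cannot simply invoke $\tau_{d}=\tau_{\mathcal{F}}$ and must instead exploit (D3), the infimum defining $d$, and ($\mathcal{F}_2$) to obtain an honest uniform comparison of $D$ and $d$.
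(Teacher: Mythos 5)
Your proposal is correct and takes essentially the same approach as the paper: the easy direction uses $d\le D$, hence $B_{D}(x,\varepsilon)\subseteq B_{d}(x,\varepsilon)$, and the substantial direction produces, via $(\mathcal{F}_2)$, a uniform $\delta>0$ giving the ball inclusion $B_{d}(x,\delta)\subseteq B_{D}(x,\varepsilon)$, exactly as the paper does. The only difference is cosmetic: you re-derive the comparison inequality $f(D(x,y))\le f(\delta)+\alpha$ directly from the infimum definition \eqref{e}, (D3) and $(\mathcal{F}_1)$, whereas the paper quotes the equivalent inequality $f(D(x,y))\le f(d(x,y)+\varepsilon^{*})+\alpha$ from \cite[Theorem 3.1]{JS1}.
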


\begin{proof}
First of all suppose that $A\subseteq X$ is $\mathcal{F}$-totally bounded. Let $\varepsilon>0.$ So $\exists$ a finite set $\{a_1,a_2,\dots, a_n\}\subseteq A$ such that $A\subseteq \bigcup_{i=1}^{n}B_{D}(a_i,\varepsilon).$ Now by the definition of the metric $d$ we have $B_{D}(a_i,\varepsilon)\subseteq B_{d}(a_i,\varepsilon)~\forall~i\in \{1,2,\dots,n\}.$ So we have $A\subseteq \bigcup_{i=1}^{n}B_{d}(a_i,\varepsilon).$ This shows that $A$ is totally bounded w.r.t to the metric $d.$

For the second part, suppose that $A$ is totally bounded w.r.t to the metric $d.$ Let $\varepsilon>0.$ In \cite[\, Theorem 3.1]{JS1}, the authors showed that for any $\varepsilon*>0,x,y\in X,y\neq x$ 
\begin{equation}
f(D(x,y))\leq f(d(x,y)+\varepsilon*)+\alpha. 
\label{e3}
\end{equation}
Now by $\mathcal{F}_2$ condition, for $(f(\varepsilon)-\alpha)$ there exists a $\delta>0$ such that if $0<t<\delta$ then $f(t)<f(\varepsilon)-\alpha.$ As $A\subseteq X$ is totally bounded w.r.t the metric $d$ so for $\frac{\delta}{2}>0~\exists$ a finite set $\{b_1,b_2,\dots, b_t\}\subseteq A$ such that $A\subseteq \bigcup_{i=1}^{t}B_{d}(b_i,\frac{\delta}{2}).$ Now we will show that $B_{d}(b_i,\frac{\delta}{2})\subseteq B_{D}(b_i,\varepsilon)~\forall~i\in \{1,2,\dots,t\}.$ Let $y\in B_{d}(b_i,\frac{\delta}{2})$ and $|B_{d}(b_i,\frac{\delta}{2})|=1.$ Then $y\in B_{D}(b_i,\varepsilon).$ On the other hand let $|B_{d}(b_i,\frac{\delta}{2})|>1$ and $y\in B_{d}(b_i,\frac{\delta}{2}),y\neq b_{i}.$ Then $d(y,b_i)<\frac{\delta}{2}.$ From \eqref{e3} we have $$f(D(y,b_i))\leq f(d(y,b_i)+\frac{\delta}{2})+\alpha.$$
$$\Rightarrow f(D(y,b_i))<f(\varepsilon)$$
$$\Rightarrow D(y,b_i)<\varepsilon.$$
So $B_{d}(b_i,\frac{\delta}{2})\subseteq B_{D}(b_i,\varepsilon)~\forall~i\in \{1,2,\dots,t\}.$ This shows that $A\subseteq \bigcup_{i=1}^{t}B_{D}(b_i,\varepsilon).$ So $A$ is $\mathcal{F}$-totally bounded.\\
\end{proof}

In \cite[\, Proposition 4.9 (ii)]{JS1}, Authors showed that if $A\subseteq X$ is $\mathcal{F}$-compact then $A\subseteq X$ is $\mathcal{F}$-totally bounded but did not say about the converse. In the next theorem we will consider this converse part.

\begin{theorem}
Let $(X,D)$ be an $\mathcal{F}$-metric space with $(f,\alpha)\in \mathcal{F}\times [0,\infty)$. Then following are equivalent:\\
(i) $X$ is $\mathcal{F}$-complete and $\mathcal{F}$-totally bounded.\\
(ii) $X$ is $\mathcal{F}$-compact.\\
(iii) $X$ is compact w.r.t the metric $d.$\\
(iv) $X$ is complete and totally bounded w.r.t the metric $d.$
\end{theorem}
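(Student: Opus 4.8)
The plan is to reduce each of the four conditions to an equivalent statement phrased entirely in terms of the metric $d$, and then to invoke the classical characterization of compactness for ordinary metric spaces. Concretely, I would establish the three equivalences $(ii)\Leftrightarrow(iii)$, $(iii)\Leftrightarrow(iv)$, and $(i)\Leftrightarrow(iv)$, which together force all four statements to be equivalent.

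First, $(ii)\Leftrightarrow(iii)$ is immediate from Theorem \ref{e2} taken with $A=X$: a subset is $\mathcal{F}$-compact precisely when it is compact with respect to $d$, so in particular $X$ is $\mathcal{F}$-compact if and only if $(X,d)$ is compact. Next, $(iii)\Leftrightarrow(iv)$ is nothing more than the classical theorem for ordinary metric spaces applied to $(X,d)$, namely that a metric space is compact if and only if it is complete and totally bounded; since $d$ is a genuine metric by the metrizability result of \cite{SAL}, this equivalence transfers verbatim.

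Finally, for $(i)\Leftrightarrow(iv)$ I would translate the two $\mathcal{F}$-conditions into their $d$-metric counterparts term by term. By \cite[\, Theorem 2.3 (iii)]{SAL}, $X$ is $\mathcal{F}$-complete if and only if $X$ is complete with respect to $d$, and by Theorem \ref{t1} (with $A=X$), $X$ is $\mathcal{F}$-totally bounded if and only if $X$ is totally bounded with respect to $d$. Conjoining these two equivalences shows that $(i)$ holds exactly when $(iv)$ does, closing the loop among all four conditions.

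Since every building-block equivalence has already been proved in the excerpt, there is no genuine obstacle in this argument; the sole nontrivial external input is the classical compactness characterization used in $(iii)\Leftrightarrow(iv)$. The one point that merits a moment's attention is simply checking that each $\mathcal{F}$-notion has been paired with exactly the correct $d$-notion, so that the conjunction appearing in $(i)$ aligns cleanly with the conjunction appearing in $(iv)$; but this matching is precisely the dictionary furnished by Theorems \ref{e2} and \ref{t1} together with \cite[\, Theorem 2.3 (iii)]{SAL}.
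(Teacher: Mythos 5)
Your proposal is correct and follows essentially the same route as the paper: both reduce each $\mathcal{F}$-notion to its $d$-metric counterpart via Theorem \ref{e2}, Theorem \ref{t1}, and \cite[\, Theorem 2.3 (iii)]{SAL}, and then invoke the classical fact that a metric space is compact if and only if it is complete and totally bounded. The only difference is organizational: you assemble three biconditionals, whereas the paper proves a cycle of one-way implications and additionally cites \cite[\, Proposition 4.9 (ii)]{JS1} for the totally-bounded half of (ii)$\Rightarrow$(i), a citation your decomposition renders unnecessary.
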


\begin{proof}
(i)$\Rightarrow$(ii) First of all suppose that $X$ is $\mathcal{F}$-complete and $\mathcal{F}$-totally bounded. Then by \cite[\, Theorem 2.3(iii)]{SAL} we can conclude that $X$ is complete w.r.t the metric $d$ and by Theorem \ref{t1} $X$ is totally bounded w.r.t the metric $d.$ This implies that $X$ is compact w.r.t the metric $d$ and by Theorem \ref{e2} we can conclude that $X$ is $\mathcal{F}$-compact.\\

(ii)$\Rightarrow$(i) Now suppose that $X$ is $\mathcal{F}$-compact. So by Theorem \ref{e2},$X$ is compact w.r.t the metric $d$. This implies that $X$ is complete w.r.t the metric $d.$ So by \cite[\, Theorem 2.3(iii)]{SAL} we can conclude that $X$ is $\mathcal{F}$-complete. The other part already proved in \cite{JS1}.

(ii)$\Rightarrow$(iii) follows from the theorem \ref{e2}.

(iii)$\Rightarrow$(iv) follows from the theory of standard metric spaces.

(iv)$\Rightarrow$(i) follows from \cite[\, Theorem 2.3(iii)]{SAL} and theorem \ref{t1}.
\end{proof}

\begin{Acknowledgement}
 The Research is funded by the Council of Scientific and Industrial Research (CSIR), Government of India under the Grant Number: $25(0285)/18/EMR-II$. 
\end{Acknowledgement}

\bibliographystyle{plain}

\end{document}